\newtheorem*{rep@theorem}{\rep@title}
\newcommand{\newreptheorem}[2]{%
	\newenvironment{rep#1}[1]{%
		\def\rep@title{#2 \ref{##1}}%
		\begin{rep@theorem}}%
		{\end{rep@theorem}}}
\newtheorem{lemma}{Lemma}
\newtheorem{theorem}[lemma]{Theorem}
\newtheorem{prop}[lemma]{Proposition}
\theoremstyle{definition}
\newtheorem{example}[lemma]{Example}
\theoremstyle{remark}
\newcommand{\ints}{\mathbb{Z}}
\newcommand{\rationals}{\mathbb{Q}}
\newcommand{\Frob}{\operatorname{Frob}}
\newcommand{\End}{\operatorname{End}}
\newcommand{\Span}{\operatorname{Span}}
\newcommand{\id}{\operatorname{id}}
\title{Every $7$-Dimensional Abelian Variety over $\rationals_p$ has
a Reducible $\ell$-adic Galois Representation }
\author{Lambert A'Campo}
\date{\today}
\begin{document}

\maketitle

\begin{abstract}
Let $K$ be a complete, discretely valued field with finite residue field
and $G_K$ its absolute Galois group.
The subject of this note is the study of the
set of positive integers $d$ for which there exists an absolutely irreducible $\ell$-adic representation of
$G_K$ of dimension $d$ with rational traces on inertia. 
Our main result is that non-Sophie Germain primes are not in this set
when the residue characteristic of $K$ is $> 3$.
The result stated in the title is a special case.
2010 Mathematics Subject Classification: 11G10, 11F80
\end{abstract}

Over a number field one expects a `generic' abelian variety to
be irreducible. For instance \cite{zarhin1999} proves
a result in this direction and provides us with abelian varieties
$A$ of any dimension such that $\End(A) = \ints$. 
By Faltings' Isogeny Theorem the Tate module of such an abelian variety is
an absolutely irreducible Galois representation.

Over local fields with residue characteristic $p$,
the situation is very different. The dimensions which 
appear in absolutely irreducible $\ell$-adic Galois representations
with rational traces on inertia form a proper
subset of the positive integers. 
We are not able to give a full description
of this subset, however we prove some restrictions, namely we show that if
the representation is tamely ramified, then its dimension is
a value of the Euler totient function, see Proposition \ref{phim_prop}. 
In the case of wild ramification
we can only conclude that $(p-1)$ divides the dimension, see Proposition \ref{wildp_prop}.
Nevertheless,
this is still enough to show

\begin{theorem} \label{abelian_var_dim_thm}
		Let $p \neq 2,3$, $K/\rationals_p$ a finite extension
		and $\ell \neq p$ a prime. If $d$ is a prime such that $2d + 1$ is not prime, then there is no abelian variety $A/K$ of dimension $d$
	whose associated Galois representation \[
	V_\ell A := \left( \lim_{\substack{\longleftarrow \\ n \geq 1}} A[\ell^n](\overline{K}) \right) \otimes_{\ints_\ell} \rationals_\ell
	\]
	is absolutely irreducible. In particular, the conclusion holds for
	all primes $d \equiv 1 \pmod{3}$.
\end{theorem}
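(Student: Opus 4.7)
The plan is to apply Propositions \ref{phim_prop} and \ref{wildp_prop} to the $2d$-dimensional Galois representation $V_\ell A$, after first noting that $V_\ell A$ has rational traces on inertia --- a standard property of Tate modules of abelian varieties over local fields. Supposing for contradiction that $V_\ell A$ is absolutely irreducible, I would split into tame and wild cases. Since $d$ is prime and $2d+1$ is not prime, the cases $d \in \{2,3,5\}$ are automatically excluded, and one may assume $d \geq 7$ is an odd prime.

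In the wild case, Proposition \ref{wildp_prop} yields $(p-1) \mid 2d$. Since $p \geq 5$ we have $p - 1 \geq 4$, and for $d$ an odd prime the positive divisors of $2d$ are $\{1, 2, d, 2d\}$; hence $p - 1 \in \{d, 2d\}$. The option $p = d + 1$ is ruled out by parity ($d+1$ is even while $p$ is odd), leaving $p = 2d+1$, which must then be prime --- contradicting the hypothesis.

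In the tame case, Proposition \ref{phim_prop} gives $2d = \varphi(m)$ for some positive integer $m$. The combinatorial heart of the argument is to show that for $d$ an odd prime this equation forces $2d+1$ to be prime. Using multiplicativity of $\varphi$ together with the observation that $\varphi(q^k)$ is even whenever $q^k > 1$ is an odd prime power, any $m$ with two distinct odd prime factors satisfies $4 \mid \varphi(m)$, incompatible with $\varphi(m) = 2d$ for odd $d$; the same reasoning eliminates $8 \mid m$ and $4q \mid m$ with $q$ odd prime. The possibilities collapse to $m \in \{q^k,\, 2q^k\}$ with $q$ an odd prime, and a direct substitution then shows that the only solution compatible with $d \geq 7$ prime is $q = 2d+1$, $k = 1$, once again requiring $2d+1$ to be prime.

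The ``in particular'' assertion is immediate: a prime $d \equiv 1 \pmod{3}$ satisfies $d \geq 7$ and $3 \mid 2d+1$, so $2d + 1 > 3$ is composite. The main obstacle in this plan is the tame-case enumeration of solutions to $\varphi(m) = 2d$, which is elementary but requires a careful organization of the possible factorization patterns of $m$; the wild case and the concluding divisibility observation are immediate.
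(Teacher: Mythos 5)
Your strategy coincides with the paper's: apply Propositions \ref{phim_prop} and \ref{wildp_prop} to the $2d$-dimensional representation $V_\ell A$, split into the tame and wild cases, and derive a contradiction from the arithmetic of $\varphi(m)=2d$ and $(p-1)\mid 2d$. Those two arithmetic steps are carried out correctly, and in fact in more detail than the paper gives (the paper only states ``$\varphi(m)=2d$ is impossible since $2d+1$ is not prime'' and lists $p\in\{2,3,d+1,2d+1\}$; your factorization analysis of $\varphi(m)=2d$ and the parity argument ruling out $p=d+1$ are exactly what is needed to justify these claims).

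There is, however, one genuine gap: the hypothesis of both propositions is that every element of $I_K$ has rational trace, and you dismiss this as ``a standard property of Tate modules of abelian varieties over local fields.'' The standard reference (Serre--Tate, Theorem 2) gives integrality and $\ell$-independence of the characteristic polynomials of inertia elements only for abelian varieties with \emph{potentially good} reduction; for a general $A/K$ this is not a quotable fact, and proving it would require working through the semistable reduction filtration. The paper closes this gap by exploiting the absolute irreducibility assumption itself: by the Grothendieck Monodromy Theorem the nilpotent operator $N$ must vanish on an irreducible representation (as in the proof of Proposition \ref{phim_prop}), so $I_K$ acts on $V_\ell A$ through a finite quotient; the N\'eron--Ogg--Shafarevich criterion then shows $A$ has potentially good reduction, and only at that point does Serre--Tate Theorem 2 apply to give rational traces on inertia. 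Your proof needs this reduction (or an equivalent argument) inserted before Propositions \ref{phim_prop} and \ref{wildp_prop} can legitimately be invoked; with it, the rest of your argument goes through as written.
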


\begin{example}
	Let $C/\rationals_5$ be the genus 7 hyperelliptic curve $y^2 = x^{15} - 5x$. Then the Jacobian of $C$ is a 7-dimensional abelian variety over $\rationals_5$ and has a reducible $\ell$-adic Galois representation because $15 = 2 \cdot 7 + 1$ is not prime. 
	
	In this special example we can verify the theorem using the extra automorphism
	$\alpha : (x,y) \mapsto (-x, i y)$, where $i \in \rationals_5$ is a
	root of $x^2 + 1$. One checks that
	$\alpha^2 + 1 = 0$ without difficulty. By \cite[n° 69, Corollaire 2]{weil48},
	the characteristic polynomial of $\alpha$ has rational coefficients and thus is a power of 
	$x^2 + 1$ and by counting dimensions it must be $(x^2 + 1)^7$.
	Hence, we have the non-trivial decomposition $V_\ell A \otimes_{\rationals_\ell} \rationals_\ell(i) = \ker (\alpha - i) \oplus \ker(\alpha + i)$ into $7$-dimensional summands.
\end{example}

A \emph{Sophie Germain prime} is a prime number $d$ such that $2d + 1$ is also prime. Consequently, Theorem \ref{abelian_var_dim_thm} is concerned with \emph{non-Sophie German primes}. The first few are $d = 7,13,17,19,31 \dots$
\cite[{\href{https://oeis.org/A053176}{A053176}}]{OEIS}.
The set of non-Sophie Germain primes
is infinite since for any prime $d \equiv 1 \pmod{3}$, 3 divides $2d + 1$.
Thus every prime $\equiv 1 \pmod{3}$ is not a Sophie Germain prime and the 
theorem applies.
Moreover, non-Sophie Germain primes should contain 100\% of all primes
by the following probabilistic heuristic. The events `$n$ is prime' and `$2n + 1$ is prime' have probabilities $1/\log n$ and
$1/\log(2n + 1)$, respectively. They are not quite independent but one 
can analyse the dependence and predict the probability that $n$ is a
Sophie Germain prime to be $C/(\log n \log(2 n + 1))$
 for some constant $C$ \cite{bateman_horn}.
Thus the number of Sophie Germain primes
up to $x$ is of order
$
Cx/(\log x)^2
$
which is easily seen to be 0\% of primes as $x \to \infty$ by the prime number theorem.

Our results continue the observation \cite[footnote 2]{dokchitser_surjective}
which shows that it is more difficult to attack the 
inverse Galois problem for the 
groups $\operatorname{GSp}_{2n}(\mathbb{F}_p)$ by constructing
an abelian variety over $\rationals$ with suitable reductions 
as one can easily do with elliptic curves for $n = 1$.

We fix some notations. Let
$K$ be a complete, discretely valued field with finite residue field
$k$ of characteristic $p$ and cardinality $q$. So $K$
is either a finite extension of $\rationals_p$ or a finite extension of
$\mathbb{F}_p((t))$. The former is the most interesting case since
in the latter all ramification is tame.
The absolute Galois group $G_K$ of the field $K$ is a split extension
\[
1 \to I_K \to G_K \to G_k \to 0,
\]
where $I_K$ is the inertia group of $K$ \cite[I. \S 7]{cassels_ant}.
The representations we consider are
continuous representations $\rho: G_K \to \operatorname{GL}_n(\overline{\rationals_\ell})$, where $\ell \neq p$
and for all $g \in I_K$,
$\operatorname{tr}(\rho(g)) \in \rationals$. In particular,
this should include the class of `geometric' $\ell$-adic 
representations which occur as the Tate module 
of an abelian variety $A/K$ or more generally as the $\ell$-adic
cohomology of a variety $X/K$. For example see \cite[Theorem 2]{serre_tate_good_reduction} for a proof that 
the Tate module of
an abelian variety over $K$ with potentially good reduction
has rational traces on inertia. 
With these notations, our precise results are the two following propositions.

\begin{prop} \label{phim_prop}
	Suppose that $V$ is a continuous, irreducible,
	tamely ramified 
	$\overline{\rationals_\ell}$-representation of $G_K$ such that the 	trace of any $h \in I_K$ 
	is rational.
	Then either $\dim V = 1,2$ or there exists an odd prime $v \neq p$, 
	such that $\dim V = (v-1)v^a$, for some $a \geq 0$ such that
	$q$ generates $(\ints/v^{a+1}\ints)^\times$. In particular,
	$\dim V = \varphi(m)$ for some positive integer $m$.\footnote{See 
		\cite[{\href{http://oeis.org/A005277}{A005277}}]{OEIS} for
		examples of even integers which are not a value of $\varphi$.}
	Moreover, each of these dimensions is realised by such a representation.
\end{prop}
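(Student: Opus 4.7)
The strategy is to apply Clifford theory to realise $V$ as induced from a character of an unramified extension, then use the rationality of inertia traces to force a number-theoretic condition on the Frobenius orbit of that character.

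Since $V$ is tame and $\ell \neq p$, the image of $I_K$ in $\operatorname{GL}(V)$ is a finite cyclic group, so $V|_{I_K}$ splits as a direct sum of characters. Clifford theory applied to the normal abelian subgroup $I_K \subset W_K$, together with the conjugation relation $\sigma\tau\sigma^{-1} = \tau^q$ on $I_K^{\mathrm{tame}}$, shows that these characters form a single Frobenius orbit $\{\chi, \chi^q, \dots, \chi^{q^{d-1}}\}$, where $\chi$ has some order $m$ prime to $p$ and $d = \operatorname{ord}_{(\mathbb{Z}/m\mathbb{Z})^\times}(q)$. Because $W_{K_d}/I_K \cong \mathbb{Z}$ is free and Frobenius of $K_d$ acts trivially on $\chi$ (as $q^d \equiv 1 \bmod m$), $\chi$ extends to a character $\psi$ of $W_{K_d}$, and $V \cong \Ind_{W_{K_d}}^{W_K}(\psi)$ has dimension $d$.

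I would then impose the rationality condition: the inertia character sum $\sum_{i=0}^{d-1} \chi^{q^i}$ must be $\mathbb{Q}$-valued. Applying $g \in \Gal(\mathbb{Q}(\zeta_m)/\mathbb{Q}) = (\mathbb{Z}/m\mathbb{Z})^\times$ sends the sum to $\sum_i \chi^{g q^i}$, so invariance for all $g$ is equivalent to $g\langle q\rangle = \langle q\rangle$ for all $g$; as $\langle q\rangle$ is a subgroup, this forces $\langle q\rangle = (\mathbb{Z}/m\mathbb{Z})^\times$. Hence $q$ generates $(\mathbb{Z}/m\mathbb{Z})^\times$ and $\dim V = \varphi(m)$. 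Cyclicity of $(\mathbb{Z}/m\mathbb{Z})^\times$ restricts $m$ to $\{1, 2, 4, v^k, 2v^k\}$ for some odd prime $v$ and $k \geq 1$, and $\gcd(m,p)=1$ rules out $v=p$. The values $\varphi(m)$ are $1$, $2$, and $(v-1)v^{k-1}$, matching the stated classification after relabelling $a = k-1$.

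For the realisation, given admissible $(m, q)$ I would take any surjection $\chi : I_K^{\mathrm{tame}} \twoheadrightarrow \mu_m(\overline{\mathbb{Q}_\ell})$ --- available because $I_K^{\mathrm{tame}}$ surjects onto $\mu_m(\overline k)$ whenever $\gcd(m,p)=1$ --- extend to a character $\psi$ of $W_{K_d}$, and induce to $W_K$. The generating hypothesis ensures the stabiliser of $\chi$ is exactly $W_{K_d}$, so the induced representation is irreducible by Mackey's criterion, and the inertia trace is a full Galois orbit sum, hence rational. The main technical obstacle I anticipate is the opening step, namely verifying that $V|_{I_K}$ genuinely decomposes into characters of finite order prime to $p$: this requires combining tameness, continuity of $\rho$ and rationality of traces to rule out infinite-order eigenvalues of inertia. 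Once this foundation is in place, the remaining steps are essentially routine representation theory.
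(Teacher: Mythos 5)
Granting the finiteness of the inertia image, the body of your argument is correct and takes a genuinely different route from the paper's. You realise $V$ as $\Ind_{W_{K_d}}^{W_K}(\psi)$ via Clifford theory (the constituent over the stabiliser is a character because $I_K$ acts on it by scalars and the quotient is generated by Frobenius), and then force $\langle q\rangle = (\ints/m\ints)^\times$ from Galois-stability of the inertia character sum; this works, since distinct characters of the cyclic inertia image are linearly independent, so rationality of the trace does pin down the multiset of exponents. The paper argues instead with a generator $\tau$ of the finite image: minimality of $m$ plus irreducibility show all eigenvalues of $\rho(\tau)$ are primitive $m$th roots of unity, rationality gives characteristic polynomial $\Phi_m(X)^t$, and an eigenvector of $\phi_K^{r}$ (with $r=\operatorname{ord}_m q$) spans a subrepresentation of dimension at most $r$, forcing $t=1$ and $r=\varphi(m)$. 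Your version buys a cleaner structural statement (induction from a character), the paper's avoids Clifford theory and stays elementary.

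The genuine gap is precisely the step you defer: your opening claim that tameness and $\ell\neq p$ make the image of $I_K$ a finite cyclic group is false as stated, and your proposed repair is aimed at the wrong enemy. The tame quotient of $I_K$ surjects onto $\ints_\ell$, which has infinite continuous images in $\operatorname{GL}_n(\overline{\rationals_\ell})$; the Tate module of a Tate elliptic curve is tamely ramified, continuous, has rational traces on inertia (every inertia element is unipotent, with trace $2$), and yet has infinite inertia image --- so ``ruling out infinite-order eigenvalues'' by combining tameness, continuity and rationality cannot close the gap, because the obstruction is a nontrivial unipotent part, not an infinite-order eigenvalue. What is needed, and what the paper uses, is irreducibility together with the Grothendieck monodromy theorem: some open $H<I_K$ acts by $\exp(t_\ell(h)N)$ with $N$ nilpotent and $\phi_K^{-1}N\phi_K=qN$; after shrinking $H$ to be normal in $G_K$, $V^H=\ker N$ is a $G_K$-subrepresentation, nonzero since $N$ is nilpotent, hence equal to $V$ by irreducibility, so $N=0$ and inertia acts through the finite group $I_K/H$. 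With that paragraph added, your argument goes through. One smaller point: in your realisation step, inducing an arbitrary extension $\psi$ only produces a representation of the Weil group; choose $\psi$ of finite order (or use the model of functions on $(\ints/m\ints)^\times$ with $(\phi_K f)(x)=f(qx)$) so that the representation factors through a finite quotient and is a continuous representation of $G_K$, which is what the proposition asserts.
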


\begin{prop} \label{wildp_prop}
	Suppose that $V$ is a continuous, irreducible,
	wildly ramified
	$\overline{\rationals_\ell}$-representation of $G_K$ such that
	the trace of any $h \in I_K$ is rational.
	Then $(p -1) \mid \dim V$.
\end{prop}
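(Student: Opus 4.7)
The plan is to reduce to a finite-group problem, combine Clifford's theorem with the rationality hypothesis, and conclude via a combinatorial lemma on $\rationals$-Galois orbits of irreducible representations of $p$-groups.

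First, I show $\rho(I_K)$ is finite. Grothendieck's $\ell$-adic monodromy theorem gives an open $I'\subseteq I_K$ on which $\rho$ is unipotent; since $I_K$ has only finitely many open subgroups of any given index, the $G_K$-orbit of $I'$ under conjugation is finite, and replacing $I'$ by its intersection makes $I'\triangleleft G_K$ without losing unipotence. Then $V^{I'}\neq 0$ by Kolchin, is $G_K$-stable by normality, and irreducibility of $V$ forces $V^{I'}=V$, i.e.\ $\rho(I')=1$. Hence $\overline{I}:=\rho(I_K)$ is finite, with normal Sylow $p$-subgroup $\overline P := \rho(P_K)$, since $I_K/P_K$ is pro-$p'$.

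Applying Clifford's theorem to $P_K\triangleleft G_K$, \[V|_{P_K}\cong m\,(W_1\oplus\cdots\oplus W_t),\] where the $W_i$ are pairwise non-isomorphic irreducible $\overline{P}$-representations forming a single $G_K$-orbit under conjugation, each with common multiplicity $m\geq 1$. Wild ramification, together with the fact that $G_K$-conjugation preserves triviality, forces every $W_i$ to be nontrivial. Rationality of $\chi_V$ on $P_K$, applied after Galois-conjugating and using linear independence of irreducible characters of $\overline{P}$, says that $\{[W_1],\ldots,[W_t]\}$ is stable under the natural $\Gal(\overline{\rationals}/\rationals)$-action on irreducible representations of $\overline P$, hence is a disjoint union of $\rationals$-Galois orbits.

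The proof reduces to the following key lemma: for $p$ odd, every nontrivial irreducible representation $W$ of a finite $p$-group $P$ has $\rationals$-Galois orbit of size divisible by $p-1$ (the case $p=2$ is vacuous). To prove this, let $Q := P/\ker W$, a nontrivial $p$-group on which $W$ is faithful, and let $\omega\colon Z(Q)\to\overline{\rationals_\ell}^\times$ be the character by which the nontrivial centre $Z(Q)$ acts (by Schur's lemma); faithfulness of $W$ on $Q$ forces $\omega$ to be faithful on $Z(Q)$, so $\omega$ has order $p^m$ for some $m\geq 1$. Any $\sigma_a\in(\ints/p^k\ints)^\times$ (where $p^k$ is the exponent of $P$) that stabilises $[W]$ must satisfy $\omega^a=\omega$, i.e.\ $p^m\mid a-1$, in particular $a\equiv 1\pmod p$. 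Hence the Galois stabiliser of $[W]$ lies in the index-$(p-1)$ subgroup $\ker\bigl((\ints/p^k\ints)^\times\twoheadrightarrow(\ints/p\ints)^\times\bigr)$, so the orbit size is divisible by $p-1$. It follows that each orbit composing $\{[W_i]\}$ has size divisible by $p-1$, so $(p-1)\mid t$ and $(p-1)\mid mt\,\dim W_1=\dim V$. The real content is the key lemma, whose main subtlety is the necessity of passing to the faithful quotient $Q=P/\ker W$ in order to force the central character to be nontrivial; this is where the structural fact that finite $p$-groups have nontrivial centres enters.
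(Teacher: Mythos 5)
Your argument is, at its core, the paper's own proof: reduce to a finite image of wild inertia, observe that every irreducible constituent of $V|_{P_K}$ is nontrivial, pass to the faithful quotient and use its nontrivial central character (of order $p^m$, $m\geq 1$) to force the $\rationals$-Galois stabiliser of each constituent into the kernel of $(\ints/p^k\ints)^\times \to (\ints/p\ints)^\times$, and then use rationality of the trace on $P_K$ to see that the constituents come in full Galois orbits, each contributing a multiple of $p-1$ to $\dim V$. The Clifford-theoretic packaging (single $G_K$-orbit, equal multiplicities) is harmless but not needed; the paper gets the same conclusion directly from semisimplicity of $V|_{P_K}$ and normality of $P_K$. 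The genuine difference is how finiteness is obtained: you use Grothendieck monodromy plus Kolchin to kill an open subgroup of $I_K$ (as the paper does in Proposition \ref{phim_prop}), whereas the paper's wild case uses Lemma \ref{lattice_lemma} and reduction modulo $\mathfrak{m}_F$, which yields finiteness of $\rho(P_K)$ (all that is needed) from the fact that the reduction kernel is pro-$\ell$.

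Two technical points in your finiteness step need repair. First, the assertion that $I_K$ has only finitely many open subgroups of any given index is false: the wild part of $I_K$ is huge (for instance $K^{\mathrm{ur}}(\mu_p)$ has infinitely many cyclic degree-$p$ extensions, giving infinitely many open subgroups of $I_K$ of one fixed index), so you cannot normalise $I'$ by intersecting ``the finitely many'' conjugates. The standard fix is immediate: the monodromy theorem gives $I'=I_L$ for a finite extension $L/K$, and replacing $L$ by its Galois closure makes $I_L$ normal in $G_K$ while keeping it open in $I_K$. Second, the monodromy theorem applies to representations defined over a finite extension $F/\rationals_\ell$; for a $\overline{\rationals_\ell}$-representation you must first descend to such an $F$, which the paper does via topological finite generation of $G_K$ (Jannsen--Wingberg). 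With these repairs your proof is correct.
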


For the proofs we fix a geometric Frobenius element $\phi_K \in G_K$,
i.e. an element which reduces to $\Frob_k^{-1} \in G_k$,
where $\Frob_k(x) = x^q$ and $q = |k|$. Moreover, recall
that $I_K$ has a unique pro-$p$ Sylow subgroup $P_K$,
called the wild inertia group which is also a normal subgroup of $G_K$.
There is an isomorphism $I_K/P_K \cong \prod_{v \neq p} \ints_v$
such that $\phi_K^{-1} x \phi_K = x^q$ for all $x \in I_K / P_K$
and we fix a projection $t_\ell : I_K/P_K \to \ints_\ell$ which is
the maximal pro-$\ell$ quotient of $I_K/P_K$. See \cite[I.\S 8]{cassels_ant} for proofs of these facts. 

\begin{proof}[Proof of Proposition \ref{phim_prop}]
	Since $G_K/P_K$ is topologically finitely generated, 
	we can assume that $V$ is defined over
	a finite extension $F/\rationals_\ell$.
	Then the Grothendieck Monodromy Theorem \cite[Appendix]{serre_tate_good_reduction} shows
	that there is an open subgroup $H < I_K$ 
	and a nilpotent operator $N \in \End(V)$ such that
	$h \in H$ acts as $\exp(t_\ell(h) N)$
	and $\phi_K^{-1} N \phi_K = q N$. 
	
	We now show that in fact $N = 0$. By possibly shrinking $H$ we may assume that it is normal in $G_K$. Now $V^H = \ker N$ is a subrepresentation of $V$ and either $\ker N = 0$ or $\ker N = V$ since $V$ is irreducible. The former contradicts the fact that $N$ is nilpotent and hence $\ker N = V$ and $N = 0$. Consequently, $I_K$ acts through the finite quotient $I_K/H$.
	
	The tame inertia group
	is pro-cyclic and so $I_K$ acts on $V$ 
	through $\ints/m\ints$ for some
	integer $m$ which we choose to be minimal. 
	Let $\tau \in I_K$ be a generator of this action.
	The eigenvalues of $\rho(\tau)$ are $m$th roots of unity.
	Suppose one of the eigenvalues is not a primitive root of unity.
	Then there is $n < m$ such that $W := \{ v \in V : \rho(\tau)^n v = v \} \subset V$ is non-zero. Clearly, $W$ is stable under multiplication by $\rho(\tau)$. The relation $\tau^n \phi_K = \phi_K \tau^{qn}$ 
	implies that $W$ is also stable under $\phi_K$: If
	$w \in W$, then 
	\[
	\rho( \tau^n) \rho(\phi_K) w = \rho(\phi_K \tau^{qn}) w =  \rho(\phi_K) w,
	\]
	 i.e. $\rho(\phi_K) w \in W$. Thus $W$ is a subrepresentation of $V$. But $W \neq V$
	since $\rho(\tau)$ has order $m$ by the minimality of $m$.
	This contradicts the irreducibility of $V$.
	
	Thus all eigenvalues of $\rho(\tau)$ are primitive $m$th roots
	of unity and so $\det(X \id - \rho(\tau)) = \Phi_m(X)^t$
	for some $t$, where $\Phi_m$ is the $m$th cyclotomic polynomial.
	Moreover, finite group representation theory applied to $\ints/m\ints$ shows that $\rho(\tau)$ is diagonalisable.
	So $V = \bigoplus_{\lambda} V_\lambda$, where
	the direct sum is over primitive $m$th roots of unity and
	$V_\lambda$ is the $\lambda$-eigenspace of $\rho(\tau)$ and
	$\dim V_\lambda = t$.

	Let $\lambda$ be a primitive $m$th root of unity.
	The relation $\phi_K^{-1} \tau \phi_K = \tau^q$
	shows that $\phi_K$ maps $V_\lambda$ to $V_{\lambda^q}$.
	Let $r$ be the order of $q$ in $(\ints/m\ints)^\times$ so 
	that $r$ is the smallest positive integer with $\phi_K^r V_\lambda = V_\lambda$.
	Choose $v \in V_\lambda \setminus \{0\}$ to be an eigenvector of 
	$\phi_K^{r}$. Then the subspace
	$U = \Span( \{\phi_K^n v : n \in \ints \}) \subset V$
	is a subrepresentation of dimension $r$.
	By the irreducibility of $V$, we find $U = V$.
	Comparing dimensions yields
	\[
	t |(\ints/m\ints)^\times| = \dim V = \dim U = r.
	\]
	So $t = 1$ and $q$ is a generator of $(\ints/m\ints)^\times$.
	The group $(\ints/m\ints)^\times$ is cyclic only when $m = 2,4$, $m = v^{a+1}$ or $m = 2 v^{a+1}$,
	where $v$ is an odd prime. Hence $\dim V = 1,2$ or $\dim V = (v-1)v^a$.
	
	Moreover, for every such integer $m$ which is coprime to $p$, there
	is a unique quotient of the tame inertia group of order $m$
	and we can define a representation $V$ of $G_K$ by the space 
	of functions
	$V = \{f : (\ints/m\ints)^\times \to \overline{\rationals_\ell}\}$,
	where $(\phi_K f)(x) = f(q x)$ and
	$(\tau f)(x) = \lambda(x) f(x)$ and
	$\lambda : \ints/m\ints \to \overline{\rationals_\ell}^\times$ is any monomorphism.
\end{proof}

\begin{lemma} \label{lattice_lemma}
	Let $G$ be a compact group and $\rho : G \to \operatorname{GL}_n(F)$
	a continuous homomorphism, where $F$ is a discretely valued field
	of characteristic $0$ with valuation $v : F^\times \to \ints$
	and ring of integers $R = \{ x \in F : v(x) \geq 0\}$.
	Then $\rho$ is conjugate to 
	a continuous homomorphism $G \to \operatorname{GL}_n(R)$.
\end{lemma}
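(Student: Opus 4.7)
My plan is to produce a $G$-stable $R$-lattice $L \subset F^n$ of full rank $n$; any $R$-basis of $L$ then yields a matrix $P \in \operatorname{GL}_n(F)$ such that $P^{-1} \rho(g) P \in \operatorname{GL}_n(R)$ for every $g \in G$, and the conjugated representation is the desired one. The natural candidate is
\[
L := \sum_{g \in G} \rho(g) \cdot R^n \subset F^n,
\]
which is by construction $\rho(G)$-stable and contains the standard lattice $L_0 = R^n$, hence spans $F^n$ over $F$. The only substantive step is to show $L$ is finitely generated as an $R$-module.

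This is where compactness of $G$ enters. Continuity of $\rho$ makes $\rho(G)$ a compact subset of $\operatorname{GL}_n(F) \subset F^{n^2}$, so each matrix coefficient $g \mapsto \rho(g)_{ij}$ has compact image in $F$. I then use the elementary observation that any compact subset of a discretely valued field is bounded: if $\pi$ is a uniformiser of $R$, then the open subsets $\pi^{-N} R$ form an increasing exhaustive cover of $F$, so compactness yields a finite subcover, which reduces to a single member. Applying this to each of the $n^2$ matrix coefficients and taking $N$ large enough to dominate them all simultaneously, I obtain $\rho(g) L_0 \subseteq \pi^{-N} L_0$ for every $g$, so $L \subseteq \pi^{-N} L_0$. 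Since $R$ is a DVR, it is a Noetherian PID, so $L$ is finitely generated (submodule of a finitely generated module over a Noetherian ring), torsion-free, and therefore free; as $L_0 \subseteq L \subseteq \pi^{-N} L_0$, its rank is exactly $n$.

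Picking an $R$-basis of $L$ and letting $P \in \operatorname{GL}_n(F)$ be the change-of-basis matrix from the standard basis to this one finishes the construction; continuity of $g \mapsto P^{-1} \rho(g) P$ into $\operatorname{GL}_n(R)$ is inherited from that of $\rho$. The main obstacle is conceptual rather than computational, namely recognising that compactness of $\rho(G)$ translates, via the $\pi^{-N} R$ exhaustive cover, into a uniform bound on the denominators of all matrix entries of $\rho$. Once this observation is made, the rest is elementary module theory over a PID, and in particular neither a completeness hypothesis on $F$ nor the characteristic zero assumption will actually be used.
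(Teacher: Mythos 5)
Your proof is correct, and it follows the same overall strategy as the paper — produce the $G$-stable lattice $L=\sum_{g\in G}\rho(g)R^n$, show it is finitely generated and hence free of rank $n$ over the DVR $R$, and change basis — but the key finite-generation step is handled by a genuinely different mechanism. The paper notes that $R=\{x\in F: v(x)>-1/2\}$ is open in $F$, so $\operatorname{GL}_n(R)$ is open in $\operatorname{GL}_n(F)$ and $H=\rho^{-1}(\operatorname{GL}_n(R))$ is an open, hence finite-index, subgroup of the compact $G$; the lattice is then a finite sum $\sum_{gH\in G/H}\rho(g)R^n$ and finite generation is immediate, with no appeal to Noetherianity or to bounds on matrix entries. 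You instead use compactness of $\rho(G)\subset M_n(F)$ together with the open exhaustion $F=\bigcup_{N}\pi^{-N}R$ to obtain a uniform bound $\rho(G)\subseteq \pi^{-N}M_n(R)$, so that $L\subseteq\pi^{-N}R^n$ and finite generation follows because $R$ is Noetherian. Both arguments ultimately rest on the openness of $R$ in $F$ — yours through a finite subcover (the standard ``compact subgroups of $\operatorname{GL}_n$ are bounded'' fact), the paper's through finite index of an open subgroup — and your observation that neither completeness of $F$ nor characteristic $0$ is used applies equally to the paper's argument; the paper's version is marginally leaner, while yours makes the boundedness of the image explicit.
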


\begin{proof}
	Note that there are no integers between $0$ and $-1$ and hence
	$R = \{ x \in F : v(x) > -1/2\}$ is an open subset of $F$.
	Consequently $\operatorname{GL}_n(R)$ is an open subgroup of
	$\operatorname{GL}_n(F)$ and its preimage $H = \rho^{-1}(\operatorname{GL}_n(R))$ is open as well.
	By the compactness of $G$, $G/H$ is finite and so 
	$G R^n \subset F^n$ is a finitely generated, torsion free $R$-submodule which is
	$G$-invariant by definition. Since $R$ is a discrete valuation ring,
	$G R^n$ is free of rank $n$. So with respect to a basis of $G R^n$,
	$\rho$ takes values in $\operatorname{GL}_n(R)$.
\end{proof}

\begin{proof}[Proof of Proposition \ref{wildp_prop}] 
	The result is trivial for $p = 2$ so we assume $p \geq 3$.
	Then by \cite{jannsen_wingberg_1982}, $G_K$ is topologically finitely 
	generated and so $V$ is defined over
	a finite extension $F/\rationals_\ell$.\footnote{In practice one does not need the strong result of \cite{jannsen_wingberg_1982} since most representations already come defined over a finite extension of $\rationals_\ell$. For example all Tate modules are defined over $\rationals_\ell$.}
	By Lemma \ref{lattice_lemma}, we can assume that
	$G_K$ acts on $V$ by a continuous homomorphism
	$\rho : G_K \to \operatorname{GL}_n(\mathcal{O}_F)$, where $n = \dim V$
	and $\mathcal{O}_F$ is the ring of integers of $F$.
	The reduction $\overline{\rho} : G_K \to \operatorname{GL}_n(\mathcal{O}_F/\mathfrak{m}_F \mathcal{O}_F)$ has finite image. As the kernel of
	$\operatorname{GL}_n(\mathcal{O}_F) \to
	\operatorname{GL}_n(\mathcal{O}_F/\mathfrak{m}_F \mathcal{O}_F)$
	is a pro-$\ell$-group we conclude that
	$P_K$ acts faithfully through a finite quotient $G$ on $V$.
	Write $V = \bigoplus_{i} V_i$ as a direct sum of irreducible
	representations of $G$.

%	Since $P_K$ is a normal subgroup
%	of $G_K$ we can apply Clifford's theorem \cite[\S 1]{clifford_1937}  to decompose
%	the restriction of the representation $V$
%	to $G$ into irreducibles. Thus there is an isomorphism
%	of $G$-representations 
%	$V \cong \bigoplus_{i} V_i$,
%	where the $V_i$ are irreducible $G$-representations which are
%	all conjugate, i.e. if $\rho_i : G \to \operatorname{GL}(V_i)$
%	is the corresponding homomorphism, then for
%	all $j$,  $\rho_j(g) = \rho_i(t g t^{-1})$
%	for some $t \in G_K$.
	
	Note that $V^G \subset V$ is a subrepresentation since $P_K < G_K$ 
	is a normal subgroup.
	Thus $V^G = 0$ since otherwise $V$
	is tamely ramified. Consequently 
	all the $V_i$ are non-trivial.
	Let $G_i = G/ \ker \rho_i$ be the quotient of $G$ which acts
	faithfully on $V_i$.
	Since $G_i$ is a $p$-group, there is 
	a non-trivial element $g_i$ in the center of $G_i$ 
	which acts as a scalar $\lambda_i$ on $V_i$ by Schur's Lemma.
	As $G_i$ acts faithfully we find that $\lambda_i$
	is a non-trivial $p^t$th root of unity for some $t \geq 1$.
	Hence the cardinality of the Galois orbit of the character of $V_i$ 
	is a multiple of $p^{t-1} (p-1)$. Since the character
	of $V$ is defined over $\rationals$, this implies that
	the decomposition of $V$ contains all these Galois conjugates
	and in particular that $(p-1) \mid \dim V$.
\end{proof}

\begin{proof}[Proof of Theorem \ref{abelian_var_dim_thm}]
	Suppose $A/K$ is an abelian variety of dimension $d$,
	where $d$ is a prime such that $2d + 1$ is not prime
	and $V_\ell A$ is absolutely irreducible.
	To apply the results above we need to check that each 
	$h \in I_K$ has rational trace on $V_\ell A$.
	Combining the irreducibility assumption with the Grothendieck Monodromy
	Theorem \cite[Appendix]{serre_tate_good_reduction}, we see that
	$I_K$ must act
	through a finite quotient on $V_\ell A$ as in
	the proof of Proposition \ref{phim_prop}.
	By the N\'eron-Ogg-Shafarevich criterion, $A$ has potentially
	good reduction and the characteristic polynomials of 
	elements $h \in I_K$ have integral coefficients independent of $\ell$
	by \cite[Theorem 2]{serre_tate_good_reduction}.
	
	Either $V_\ell A$ is at most tamely ramified or it is wildly 
	ramified. Thus either Proposition \ref{phim_prop} or
	Proposition \ref{wildp_prop} applies to $V_\ell A$.
	In the former case, we have
	$2d = \dim V_\ell A = \varphi(m)$ for some $m$ which is impossible since $2d + 1$ is not prime.
	In the latter case we have $(p - 1) \mid 2d$, so $p \in \{2,3,d+1,2d+1\}$ contradicting the hypothesis that $p \neq 2,3$ and $2d + 1$ is not prime.
\end{proof}

\noindent \textbf{Acknowledgments.} I thank Vladimir Dokchitser for suggesting this subject to me and for encouraging me to write this note. He patiently discussed many (wrong) versions of Proposition \ref{wildp_prop} with me and provided lots of essential advice and ideas.
Moreover, I thank my father and Jesse Pajwani for reading an early version of this text.

\printbibliography

\end{document}